\documentclass[12pt,a4paper,final]{article}
\usepackage{amsmath}%
\usepackage{amstext}%
\usepackage{amssymb}%
\usepackage{showkeys}%
\usepackage{epsfig}%
\usepackage{graphicx}%
\usepackage{xcolor}
\usepackage{multicol}
\usepackage{amsthm}
\usepackage[symbol]{footmisc}

\usepackage[top=1in, bottom=1.25in, left=0.8in, right=0.8in]{geometry}

\newtheorem{prop}{Proposition}
\newtheorem{defn}{Definition}
\newtheorem{rem}{Remark}
\newtheorem{theorem}{Theorem}[section]

\newtheorem{lemma}[theorem]{Lemma}
\newtheorem{example}{Example}

\newcommand{\R}{\mathbb{R}}%
\newcommand{\N}{\mathbb{N}}%
\newcommand{\Hi}{\mathcal{H}}%

\author{Lkhamsuren Altangerel\footnote{German-Mongolian Institute for Resources and Technology, Nalaikh, Mongolia, E-mail: altangerel@gmit.edu.mn}}

\date{}

\title{Tseng's Type Methods in Continuous and Discrete Time for Quasi-Variational Inequalities\footnote{This work was partially
 supported by a grant from the IMU-CDC and Simons Foundation}}

\begin{document}
\maketitle

\begin{abstract}
    This paper presents an approach for obtaining approximate solutions to quasi-variational inequalities in a real Hilbert space by modifying Tseng's scheme, which was originally designed for variational inequalities. The study explores the existence of equilibrium points and investigates convergence results related to dynamical systems. Linear convergence for discretized systems is examined through  examples, illustrations, and special cases.  
\end{abstract}

\section{Introduction}
Quasi-variational inequalities represent a significant extension of classical variational inequalities, where the constraint set depends on the solution itself.  They were initially introduced by Bensoussan et al. \cite{bensoussan1973controle} in the context of impulse control problems and have since evolved into powerful mathematical tools with applications in economics, engineering, operations research, and partial differential equations. 

A standard sufficient condition for existence and uniqueness under Lipschitz continuous strongly monotone mapping was provided in \cite{noor1994general}. For further study, we refer to \cite{barbagallo2024existence,dreves2016uniqueness,nesterov2006solving,kovcvara1995class}.

There have been some results regarding the existence and uniqueness of solutions for quasi-variational inequalities under weaker assumptions, such as strongly pseudomonotonicity \cite{nguyen2020some,van2023existence}. Additionally, the relationships with other models such as the generalized Nash equilibrium problems \cite{dreves2016uniqueness}, and multi-leader follower games \cite{pang2005quasi} have been investigated. Regarding the applications of quasi-variational inequalities we refer to \cite{scrimali2007quasi} in telecommunication networks, \cite{scrimali2004quasi} in transportation networks, and \cite{lenzen2014solving} in image restoration. 

Recent advances in computational mathematics have led to the development of more efficient algorithms for solving QVIs. Antipin et al. \cite{antipin2018extragradient} (also see \cite{mijajlovic2023strong}) presented gradient projection and extragradient methods for solving QVIs under the assumptions of strong monotonicity and Lipschitz continuity of associated mappings.
Others include inertial projection-type methods \cite{keten2021efficient,shehu2020inertial,shehu2022linear,yao2024linear}, C-FISTA type projection algorithm \cite{yao2025c}, and gradient-type projection methods \cite{mijajlovic2019gradient}. 

Motivated by the above results, we introduce a new approach for solving QVI based on Tseng's scheme (\cite{tseng2000modified}, also see \cite{boct2020forward}), which has been investigated for variational inequality problems. Compared to existing algorithms, the main contributions of the paper can be highlighted as follows: 

\begin{itemize}
    \item To the best of our knowledge the Tseng scheme for variational inequalities has been extended to quasi-variational inequalities, sparking broad research interest in this direction.
    \item Continuous and discrete time algorithms have been investigated for quasi-variational inequalities in the general case with set-valued mappings.
    \item Similar to the inertial projection-type methods, the projection and mapping values will be evaluated once in each iteration.
\end{itemize}

The outline of the paper is as follows. Section 2 will review the definitions and preliminary results used in this paper. In Section 3, we will present the existence of equilibrium points and convergence results related to dynamical systems. Section 4 is dedicated to examining  linear convergence for discretized systems, with some examples, illustrations, and special cases included. The paper concludes with a mention of research ideas that can be considered in the future.

\section{Preliminaries}
Let $\Hi$ be a real Hilbert space with inner product $\langle\cdot,\cdot\rangle$, 
and topology induced by the norm $\|x|=\sqrt{\langle x,x\rangle}$.
Let $F:\Hi\rightarrow \Hi$ a nonlinear operator, and $K:\Hi \rightrightarrows \Hi$ be a set-valued mapping such that for each $x\in \Hi,~K(x)$ is a nonempty closed convex subset of $\Hi.$ The Quasi-Variational Inequality consists of finding $x^*\in K(x^*)$ such that 
$$
     (QVI)\hspace{1cm}\langle F(x^*),y-x^*\rangle \geq 0~~\forall y\in K(x^*).
$$
\begin{rem} In the case of a single set $K$, the problem $(QVI)$ reduces to the classical variational inequality of finding $x^*\in K$
$$
(VI)\hspace{1cm}\langle F(x^*),y-x^*\rangle \geq 0~~ \forall y\in K.
$$
\end{rem}

\begin{defn}
    Let $F:\Hi\rightarrow \Hi$ be a given mapping. The mapping $F$ is said to be:
    \begin{itemize}
        \item[(i)] $L$-Lipschitz continuous with $L>0$ if 
 $$\|F(x)-F(y)\|\leq L\|x-y\|~~\forall x,y\in \Hi.$$
        \item[(ii)] monotone if
         $$\langle x-y,F(x)-F(y)\rangle\geq 0~~\forall x,y\in \Hi.$$
         \item[(iii)] $\rho$-strongly monotone if there exists $\rho> 0$ such 
 $$\langle x-y,F(x)-F(y)\rangle\geq \rho\|x-y\|^2~~\forall x,y\in \Hi.$$ 
    \end{itemize}
\end{defn}
The projection $P_C :\Hi\to C$ onto the nonempty, closed, convex set $C\subseteq \Hi$ is defined by
$$P_C(x)=\mbox{argmin}\{\|x-y\|:~y\in C\}.$$
Note that for each $x\in \Hi,$ the implicit projection $P_{K(x)}$ is nonexpensive, i.e.,
$$\|P_{K(x)}(u)-P_{K(x)}(v)\|\leq \|u-v\|~~\forall u,v\in \Hi.$$

\begin{prop} \cite{noor1994general} \label{exist}
     Let $F:\Hi\rightarrow \Hi$ be $L$-Lipschitz continuous and $\rho$-strongly monotone on $\Hi.$ Assume that $K:\Hi \rightrightarrows \Hi$ is a set-valued mapping with nonempty, closed and convex values. Setting $\gamma:=\frac{L}{\rho}\geq 1$ and assume that there is some constant $0\leq l\leq \frac{1}{\gamma(\gamma+\sqrt{\gamma^2-1})}$ such that  
    \begin{equation} \label{cond_proj1}
\| P_{K(x)}(z)-P_{K(y)}(z)\|\leq l\|x-y\|~~\forall x,y,z\in \Hi,
    \end{equation}
 then the quasi-variational inequality $(QVI)$ has a unique solution $x^*.$ 
\end{prop}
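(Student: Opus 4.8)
The plan is to recast $(QVI)$ as a fixed-point problem and apply the Banach contraction principle. The starting point is the standard projection characterization of the metric projection onto a convex set: for any fixed $\lambda>0$, a point $x^*\in K(x^*)$ solves $(QVI)$ if and only if
\begin{equation}\label{fp}
x^*=P_{K(x^*)}\bigl(x^*-\lambda F(x^*)\bigr).
\end{equation}
Hence it suffices to show that the self-map $T(x):=P_{K(x)}(x-\lambda F(x))$ of $\Hi$ admits a unique fixed point for a suitable choice of $\lambda$, since by \eqref{fp} fixed points of $T$ and solutions of $(QVI)$ coincide.

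First I would estimate $\|T(x)-T(y)\|$. Writing $u:=x-\lambda F(x)$ and $v:=y-\lambda F(y)$ and inserting the intermediate term $P_{K(x)}(v)$, the triangle inequality gives
\begin{equation}
\|T(x)-T(y)\|\leq \|P_{K(x)}(u)-P_{K(x)}(v)\|+\|P_{K(x)}(v)-P_{K(y)}(v)\|.
\end{equation}
The first summand is bounded by $\|u-v\|$ using nonexpansiveness of the implicit projection, while the second is bounded by $l\|x-y\|$ by hypothesis \eqref{cond_proj1}. I would then control $\|u-v\|$ by expanding the square and invoking $\rho$-strong monotonicity and $L$-Lipschitz continuity of $F$:
\begin{equation}
\|u-v\|^2=\|(x-y)-\lambda(F(x)-F(y))\|^2\leq \bigl(1-2\lambda\rho+\lambda^2L^2\bigr)\|x-y\|^2.
\end{equation}
Combining the three bounds yields $\|T(x)-T(y)\|\leq\bigl(\theta(\lambda)+l\bigr)\|x-y\|$, where $\theta(\lambda):=\sqrt{1-2\lambda\rho+\lambda^2L^2}$.

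The decisive step is to choose $\lambda$ optimally and match the result to the stated bound on $l$. Minimizing the quadratic under the root gives the minimizer $\lambda=\rho/L^2$ and the value $\theta_{\min}=\sqrt{1-\rho^2/L^2}=\sqrt{\gamma^2-1}/\gamma$, using $\gamma=L/\rho$. The algebraic heart of the matter is the identity
\begin{equation}
\frac{\sqrt{\gamma^2-1}}{\gamma}+\frac{1}{\gamma(\gamma+\sqrt{\gamma^2-1})}=1,
\end{equation}
which follows from $(\gamma-\sqrt{\gamma^2-1})(\gamma+\sqrt{\gamma^2-1})=1$. Consequently $\theta_{\min}+l\leq 1$ exactly when $l\leq\frac{1}{\gamma(\gamma+\sqrt{\gamma^2-1})}$, so for the optimal $\lambda$ the map $T$ is Lipschitz with constant at most one, and the Banach fixed-point theorem delivers a unique fixed point $x^*$, which by \eqref{fp} is the unique solution of $(QVI)$.

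The main obstacle I anticipate is the endpoint case $l=\frac{1}{\gamma(\gamma+\sqrt{\gamma^2-1})}$, where even the optimal choice of $\lambda$ yields only $\theta_{\min}+l=1$, i.e.\ nonexpansiveness rather than the strict contraction that Banach's theorem requires; any non-optimal $\lambda$ makes $\theta(\lambda)$ strictly larger and hence is no help. I would handle this either by treating the degenerate case $\gamma=1$ separately (where $\theta_{\min}=0$), or by a limiting argument, or by appealing to a fixed-point result valid for this borderline regime. By contrast, the contraction estimate and the optimization over $\lambda$ are routine; making the boundary case rigorous is the only delicate point.
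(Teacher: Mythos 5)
Your argument is exactly the classical one, and it is the argument behind the cited source: the paper itself does not prove this proposition (it imports it from \cite{noor1994general}), and the two ingredients you use --- the fixed-point characterization $x^*=P_{K(x^*)}(x^*-\lambda F(x^*))$ and the contraction estimate $\|T(x)-T(y)\|\leq\bigl(l+\sqrt{1-2\lambda\rho+\lambda^2L^2}\,\bigr)\|x-y\|$ --- are precisely what the paper later records as its Lemma 3 and Lemma 5. Your optimization step is also correct: the minimizer is $\lambda=\rho/L^2$, giving $\theta_{\min}=\sqrt{\gamma^2-1}/\gamma$, and the identity $1-\theta_{\min}=\frac{1}{\gamma(\gamma+\sqrt{\gamma^2-1})}$ holds. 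So for $l$ \emph{strictly} below the stated bound, your proof is complete and needs no changes.

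Your worry about the endpoint is well founded, but none of your three proposed repairs (treating $\gamma=1$ separately, a limiting argument, a borderline fixed-point theorem) can succeed, because the statement is actually \emph{false} at equality. Counterexample: take $\Hi=\R$, $F(x)=x$, so $L=\rho=1$, $\gamma=1$, and the bound on $l$ equals $1$; take $K(x)=\{x+1\}$. Each $K(x)$ is nonempty, closed and convex, and $P_{K(x)}(z)=x+1$ for every $z$, so \eqref{cond_proj1} holds with $l=1$, attaining the bound. But $x^*\in K(x^*)$ would require $x^*=x^*+1$, so $(QVI)$ has no solution at all. (Taking instead $K(x)=\{x\}$ makes every point a solution, so uniqueness fails as well.) Note that this counterexample lives exactly in your ``degenerate case'': when $\gamma=1$, strong monotonicity and Cauchy--Schwarz force $F(x)=\rho x+F(0)$, and $T$ reduces to $x\mapsto P_{K(x)}(\mathrm{const})$, which with $l=1$ is merely nonexpansive --- a translation in the example above. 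The correct conclusion is that the hypothesis must read $l<\frac{1}{\gamma(\gamma+\sqrt{\gamma^2-1})}$, which is what your Banach argument needs and delivers; the non-strict ``$\leq$'' in the proposition as stated is an inaccuracy, and your attempt to cover it should simply be dropped rather than completed.
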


\begin{rem}
    In \cite{nesterov2006solving}, it was shown that uniqueness is guaranteed as long as  $0\leq l\leq \frac{1}{\gamma}.$ 
\end{rem}

\begin{lemma}
    Let $K:\Hi \rightrightarrows \Hi$ be a set-valued mapping with nonempty, closed and convex subset in $\Hi.$ $x^*\in K(x^*)$ is solution of $(QVI)$ if and only if for any $\alpha >0$ it holds
    $$x^*=P_{K(x^*)}(x^*-\alpha F(x^*)).$$
\end{lemma}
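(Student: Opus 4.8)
The plan is to reduce the claim to the standard variational characterization of the metric projection onto a fixed closed convex set, applied at the candidate point $x^*$. Although the set $K(x^*)$ varies with its argument, once a candidate $x^*$ is fixed the set $C:=K(x^*)$ is a genuine nonempty closed convex subset of $\Hi$, so no complication arises from the implicit dependence. This is the only conceptual point worth flagging; the rest is a one-line computation.

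First I would recall the obtuse-angle characterization of the projection: for a nonempty closed convex $C\subseteq\Hi$ and $u\in\Hi$, a point $z\in C$ equals $P_C(u)$ if and only if
$$\langle u-z,\,y-z\rangle\leq 0\quad\forall y\in C.$$
I would either invoke this as a standard fact or derive it quickly as the first-order optimality condition for the strongly convex minimization problem $\min_{y\in C}\|u-y\|^2$ that defines $P_C$.

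Next, I fix an arbitrary $\alpha>0$ and apply this characterization with $C=K(x^*)$, $u=x^*-\alpha F(x^*)$, and $z=x^*$. The identity $x^*=P_{K(x^*)}(x^*-\alpha F(x^*))$ then holds precisely when $x^*\in K(x^*)$ together with
$$\langle (x^*-\alpha F(x^*))-x^*,\,y-x^*\rangle\leq 0\quad\forall y\in K(x^*),$$
and the left-hand side simplifies immediately to $-\alpha\langle F(x^*),\,y-x^*\rangle$.

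Finally, since $\alpha>0$, dividing by $-\alpha$ reverses the inequality and produces exactly $\langle F(x^*),\,y-x^*\rangle\geq 0$ for all $y\in K(x^*)$, which is the $(QVI)$ condition. Because $\alpha$ cancels, the equivalence does not depend on which $\alpha$ is chosen: if $x^*$ solves $(QVI)$ the fixed-point equation holds for \emph{every} $\alpha>0$, and conversely the fixed-point equation for any single $\alpha>0$ already forces the $(QVI)$ inequality, which in turn gives it for all $\alpha>0$. I do not anticipate any genuine obstacle here; the full content of the lemma is the projection characterization, and the phrasing ``for any $\alpha>0$'' is explained entirely by the positive scalar $\alpha$ dropping out of the inequality.
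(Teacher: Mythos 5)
Your proof is correct. The paper actually states this lemma without any proof (it is quoted as a standard fact from the QVI literature), so there is no authorial argument to compare against; your reduction to the obtuse-angle characterization of $P_{K(x^*)}$, with the observation that the factor $\alpha>0$ cancels so that the fixed-point equation for a single $\alpha$ is equivalent to the one for all $\alpha$, is exactly the standard argument --- and it uses precisely the projection characterization that the paper records as the very next lemma.
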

    
\begin{lemma}
    Let $z\in \Hi.$ The necessary and sufficient characterizations of the projection are  
    $$P_{K(x)}(z)\in K(x)\Leftrightarrow~\langle P_{K(x)}(z)-z,y-P_{K(x)}(z) \rangle \geq 0~~\forall y\in K(x),$$
or, equivalently
$$\langle z-P_{K(x)}(z),y-P_{K(x)}(z) \rangle \leq 0~~\forall y\in K(x).$$ 
\end{lemma}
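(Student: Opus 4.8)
The plan is to treat this as the standard variational characterization of the metric projection onto the closed convex set $C:=K(x)$, and to derive the first stated equivalence directly from the minimality that defines $P_C$. The second displayed inequality is then merely a rewriting, since $\langle P_{K(x)}(z)-z,\,y-P_{K(x)}(z)\rangle = -\langle z-P_{K(x)}(z),\,y-P_{K(x)}(z)\rangle$, so I would dispose of it in a single line once the first form is established. Throughout I abbreviate $p:=P_{K(x)}(z)$, which exists and is unique because $K(x)$ is a nonempty closed convex subset of the Hilbert space $\Hi$.

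For the forward implication, fix an arbitrary $y\in K(x)$ and exploit convexity of $K(x)$: the segment $p+t(y-p)=(1-t)p+ty$ lies in $K(x)$ for every $t\in[0,1]$. Since $p$ minimizes $\|z-\cdot\|$ over $K(x)$, the scalar function
$$
\phi(t):=\|z-p-t(y-p)\|^2=\|z-p\|^2-2t\langle z-p,y-p\rangle+t^2\|y-p\|^2
$$
attains its minimum on $[0,1]$ at $t=0$, so $\phi(t)\geq\phi(0)$ for all small $t>0$. Subtracting $\phi(0)$, dividing by $t$, and letting $t\downarrow 0$ forces $\langle z-p,\,y-p\rangle\leq 0$, which is exactly the claimed inequality.

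For the reverse implication, assume $p\in K(x)$ satisfies $\langle z-p,\,y-p\rangle\leq 0$ for every $y\in K(x)$, and expand, for arbitrary $y\in K(x)$,
$$
\|z-y\|^2=\|z-p\|^2+2\langle z-p,\,p-y\rangle+\|p-y\|^2.
$$
Because $\langle z-p,\,p-y\rangle=-\langle z-p,\,y-p\rangle\geq 0$ by hypothesis, this yields $\|z-y\|^2\geq\|z-p\|^2+\|p-y\|^2\geq\|z-p\|^2$, so $p$ realizes the minimum and hence $p=P_{K(x)}(z)$; the strict inequality for $y\neq p$ simultaneously recovers uniqueness.

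The argument is essentially routine and I anticipate no genuine obstacle: existence and uniqueness of the projection onto a nonempty closed convex subset of a Hilbert space is classical, and the only mild care needed is the one-sided limit $t\downarrow 0$ in the forward direction. This is harmless because $\phi$ is a quadratic polynomial in $t$ whose minimality inequality $\phi(t)\geq\phi(0)$ is known a priori only on $[0,1]$ rather than on all of $\R$, so one must take the right-hand derivative rather than argue with an unconstrained stationary point.
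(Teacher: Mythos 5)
Your proof is correct: both directions are the standard textbook argument (right-hand derivative of the quadratic $\phi(t)$ along the segment $(1-t)p+ty$ for necessity, and the expansion $\|z-y\|^2=\|z-p\|^2+2\langle z-p,p-y\rangle+\|p-y\|^2$ for sufficiency), and the passage between the two displayed inequalities is indeed just a sign flip. Note, however, that the paper itself offers no proof of this lemma --- it is quoted as the classical characterization of the metric projection onto a nonempty closed convex subset of a Hilbert space --- so there is no authorial argument to compare against; your write-up simply supplies the standard justification the paper takes for granted.
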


\begin{lemma}
   Under the assumptions of Proposition \ref{exist} it holds
   \begin{equation}
   \|P_{K(x)}(x-\lambda F(x))-P_{K(y)}(y-\lambda F(y))\|\leq \theta \|x-y\|\quad \forall x,y\in \Hi,    
   \end{equation}
   where $\theta=l+\sqrt{1-2\lambda \rho+\lambda^2 L^2}.$
\end{lemma}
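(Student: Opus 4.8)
The plan is to reduce everything to three ingredients already available in the excerpt: the perturbation bound \eqref{cond_proj1} on the implicit projection, the nonexpansiveness of $P_{K(y)}$, and the interplay between $\rho$-strong monotonicity and $L$-Lipschitz continuity of $F$. The natural first move is to insert a mixed intermediate term that decouples the two sources of discrepancy between $P_{K(x)}(x-\lambda F(x))$ and $P_{K(y)}(y-\lambda F(y))$, namely the change of the feasible set (from $K(x)$ to $K(y)$) and the change of the point being projected (from $x-\lambda F(x)$ to $y-\lambda F(y)$). Concretely, I would write
$$
\|P_{K(x)}(x-\lambda F(x))-P_{K(y)}(y-\lambda F(y))\| \leq \|P_{K(x)}(x-\lambda F(x))-P_{K(y)}(x-\lambda F(x))\| + \|P_{K(y)}(x-\lambda F(x))-P_{K(y)}(y-\lambda F(y))\|
$$
by the triangle inequality, choosing $z=x-\lambda F(x)$ as the common projected point in the first term.

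The first term is then handled immediately by the Lipschitz-type assumption \eqref{cond_proj1} on the implicit projection with that choice of $z$, yielding a bound of $l\|x-y\|$. For the second term, both projections are onto the same set $K(y)$, so the nonexpansiveness property stated just before Proposition~\ref{exist} applies and gives the bound $\|(x-\lambda F(x))-(y-\lambda F(y))\| = \|(x-y)-\lambda\bigl(F(x)-F(y)\bigr)\|$.

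The remaining work is to estimate $\|(x-y)-\lambda(F(x)-F(y))\|$. I would square it and expand the inner product as
$$
\|(x-y)-\lambda(F(x)-F(y))\|^2 = \|x-y\|^2 - 2\lambda\langle x-y, F(x)-F(y)\rangle + \lambda^2\|F(x)-F(y)\|^2,
$$
then apply $\rho$-strong monotonicity to the cross term and $L$-Lipschitz continuity to the last term, obtaining the upper bound $(1-2\lambda\rho+\lambda^2 L^2)\|x-y\|^2$. Taking square roots and combining with the first estimate produces exactly $\theta=l+\sqrt{1-2\lambda\rho+\lambda^2 L^2}$.

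The computation is essentially routine; the only point requiring a word of care is that the radicand $1-2\lambda\rho+\lambda^2 L^2$ be nonnegative so that $\theta$ is well defined. This is automatic here: viewed as a quadratic in $\lambda$, its discriminant is $4(\rho^2-L^2)\leq 0$ since $\gamma=L/\rho\geq 1$ forces $L\geq\rho$, so the quadratic never changes sign and stays nonnegative. Hence the square root is legitimate, and no restriction on $\lambda$ is needed for the stated inequality. The main conceptual step, rather than an obstacle, is simply the correct placement of the intermediate projection so that each of the two resulting terms matches exactly one of the hypotheses.
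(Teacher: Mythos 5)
Your proposal is correct and follows essentially the same argument as the paper: a triangle inequality splitting into a set-perturbation term (bounded by \eqref{cond_proj1}) and a same-set term (bounded by nonexpansiveness), followed by the standard quadratic expansion combining strong monotonicity and Lipschitz continuity. The only difference is cosmetic --- you insert the intermediate term $P_{K(y)}(x-\lambda F(x))$ whereas the paper inserts $P_{K(x)}(y-\lambda F(y))$, a mirror-image choice that changes nothing; your added observation that the radicand $1-2\lambda\rho+\lambda^2L^2$ is automatically nonnegative since $L\geq\rho$ is a nice touch the paper omits.
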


\begin{proof}
    Let $x,y\in \Hi.$ Then, it holds
    \begin{eqnarray*}
    &&\|P_{K(x)}(x-\lambda F(x))-P_{K(y)}(y-\lambda F(y))\|\\
    &\leq& \|P_{K(x)}(x-\lambda F(x))-P_{K(x)}(y-\lambda F(y))\|+\|P_{K(x)}(y-\lambda F(y))-P_{K(y)}(y-\lambda F(y))\|\\
    &\leq& \|x-\lambda F(x)-(y-\lambda F(y))\|+l\|x-y\|.
    \end{eqnarray*}
    On the other hand, we have
    \begin{eqnarray*}
        \|x-\lambda F(x)-(y-\lambda F(y))\|^2&=&\|x-y\|^2+\lambda^2\|F(x)-F(y)\|^2-2\lambda \langle x-y, F(x)-F(y) \rangle\\
        &\leq&(1+\lambda^2 L^2-2\lambda \rho)\|x-y\|^2,
    \end{eqnarray*}
    or, equivalently
    \begin{equation}\label{est_sqrt}
        \|x-\lambda F(x)-(y-\lambda F(y))\|\leq \sqrt{1+\lambda^2 L^2-2\lambda \rho}\|x-y\|.
    \end{equation}
    By considering that $\theta= l+\sqrt{1+\lambda^2 L^2-2\lambda \rho},$ it holds
    \begin{equation}\label{est_theta}
        \|P_{K(x)}(x-\lambda F(x))-P_{K(y)}(y-\lambda F(y))\|\leq \theta \|x-y\|.
    \end{equation}
\end{proof}

\section{Dynamical system of the Tseng type}
Let $T:\Hi\rightarrow \Hi$ be a continuous mapping, and $x:(0,+\infty)\rightarrow \Hi.$
We recall the stability concepts of an equilibrium point of the general dynamical system
\begin{equation}\label{dyn_sys}
    \dot{x}(t)=T(x(t))~~t\geq 0.
\end{equation}

\begin{defn}
    \begin{itemize}
        \item[]
         \item[(i)]  A point $x^*\in \Hi$ is an equilibrium point for \eqref{dyn_sys} if $T(x^*)=0.$ 
    \item[(ii)] An equilibrium point $x^*$ of \eqref{dyn_sys} is stable if, $\forall \varepsilon>0,~\exists \delta>0$ such that, $\forall x_0\in B(x^*,\delta),$ the solution $x(t)$ of the dynamical system with $x(0)=x_0$ exists and 
    $$x(t)\in B(x^*,\varepsilon):=\{y\in \Hi|~\|x^*-y\|<\varepsilon\}~~\forall t> 0.$$
    \item[(iii)]  An equilibrium point $x^*$ of \eqref{dyn_sys} is exponentially stable if $\exists \varepsilon>0$ and constants $k>0,~\delta>0$ such that, for each solution $x(t)$ of \eqref{dyn_sys} with $x(0)\in B(x^*,\varepsilon),$ one has
        $$
          \|x(t)-x^*\|\leq k\|x(0)-x^*\|e^{-\delta t}~~\forall t\geq 0.  
       $$
    \end{itemize}
\end{defn}

We consider the following dynamical system of the Tseng type
\begin{eqnarray*} 
    &&y(t)=P_{K(x(t))}(x(t)-\lambda F(x(t)))\\
    &&\dot{x}(t)+x(t)=y(t)+\lambda (F(x(t))-F(y(t)))\\
    &&x(0)=x_0,
\end{eqnarray*}
where $\lambda >0$ and $x_0\in \Hi.$
The system is reduced to a dynamical system of type
$\dot{x}=f(x),$ where
\begin{equation} \label{Tseng_dynsys}
    f(x)=P_{K(x)}(x-\lambda F(x))+\lambda (F(x)-F(P_{K(x)}(x-\lambda F(x)))-x.
\end{equation}

\begin{theorem}
     Let the assumptions of Proposition \ref{exist} be fulfilled. Then the dynamical system \eqref{Tseng_dynsys} has an unique solution.
\end{theorem}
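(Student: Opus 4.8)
The plan is to establish existence and uniqueness of the solution to the dynamical system $\dot{x}=f(x)$ via the classical Picard--Lindelöf (Cauchy--Lipschitz) theorem. The standard approach is to show that the right-hand side $f$ is globally Lipschitz continuous on $\Hi$; once this is done, the existence of a unique (global) solution on $[0,+\infty)$ for any initial datum $x_0$ follows immediately from the Cauchy--Lipschitz theorem in Hilbert space. So the entire content of the proof reduces to a Lipschitz estimate for $f$.

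First I would rewrite $f$ in the convenient form
\begin{equation*}
f(x)=P_{K(x)}(x-\lambda F(x))+\lambda\bigl(F(x)-F(P_{K(x)}(x-\lambda F(x)))\bigr)-x,
\end{equation*}
and bound $\|f(x)-f(y)\|$ by splitting it, via the triangle inequality, into three groups of terms: the difference of the projection terms, the difference of the $\lambda F(x)-\lambda F(y)$ terms, the difference of the $\lambda F(P_{K(x)}(\cdots))$ terms, and the difference of the $x-y$ terms. For the projection differences I would invoke the preceding lemma, which gives $\|P_{K(x)}(x-\lambda F(x))-P_{K(y)}(y-\lambda F(y))\|\leq\theta\|x-y\|$ with $\theta=l+\sqrt{1+\lambda^2L^2-2\lambda\rho}$. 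For the $F(x)-F(y)$ terms I would use the $L$-Lipschitz continuity of $F$ directly. For the composite term $F(P_{K(x)}(x-\lambda F(x)))$ I would first apply $L$-Lipschitz continuity of $F$ and then again the same projection lemma, yielding a bound $\lambda L\theta\|x-y\|$.

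Assembling these pieces gives an overall estimate of the shape
\begin{equation*}
\|f(x)-f(y)\|\leq\bigl(\theta+\lambda L+\lambda L\theta+1\bigr)\|x-y\|,
\end{equation*}
so $f$ is globally Lipschitz with a constant $M:=1+\theta+\lambda L(1+\theta)$ that is finite for every fixed $\lambda>0$. This is all that is required to conclude. I would then state that by the Cauchy--Lipschitz theorem the initial value problem $\dot{x}=f(x)$, $x(0)=x_0$, admits a unique solution, which completes the proof.

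The main obstacle, such as it is, lies in carefully handling the composite term $F\circ P_{K(\cdot)}(\cdot-\lambda F(\cdot))$: one must chain the Lipschitz constant of $F$ with the projection estimate of the earlier lemma in the correct order, rather than in bounding the simpler terms. A secondary point worth noting is that the lemma's constant $\theta$ depends on $\lambda$, but since $\lambda>0$ is fixed throughout, $\theta$ is simply a finite constant and no uniformity in $\lambda$ is needed; global (as opposed to merely local) existence follows precisely because the Lipschitz bound is global on all of $\Hi$, so no blow-up in finite time can occur.
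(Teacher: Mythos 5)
Your proposal is correct and follows essentially the same route as the paper: both establish that $f$ is globally Lipschitz with constant $(1+\lambda L)(1+\theta)$ via a triangle-inequality decomposition and the projection estimate, then invoke the Cauchy--Lipschitz--Picard theorem. The only cosmetic difference is that you cite the preceding lemma for the bound $\|P_{K(x)}(x-\lambda F(x))-P_{K(y)}(y-\lambda F(y))\|\leq\theta\|x-y\|$, while the paper re-derives that estimate inline within the proof.
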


\begin{proof}
\begin{eqnarray*}
    &&\|f(x)-f(z)\|\leq \|z-x\|+\|
    P_{K(z)}(z-\lambda F(z))-P_{K(x)}(x-\lambda F(x))\|\\
    &+&\lambda \|F(z)-F(x)\|+\lambda\|F(P_{K(z)}(z-\lambda F(z))-F(P_{K(x)}(x-\lambda F(x))\|\\
    &\leq& (1+\lambda L)\|z-x\|+(1+\lambda L)\|
    P_{K(z)}(z-\lambda F(z))-P_{K(x)}(x-\lambda F(x))\|\\
    &\leq& (1+\lambda L)\|z-x\|+(1+\lambda L)\|
    P_{K(z)}(z-\lambda F(z))-P_{K(x)}(z-\lambda F(z))\|\\
    &+&(1+\lambda L)\|
    P_{K(x)}(z-\lambda F(z))-P_{K(x)}(x-\lambda F(x))\|\\
    &\leq& (1+l)(1+\lambda L)\|z-x\|+(1+\lambda L)\|z-\lambda F(z)-(x-\lambda F(x)\|.
\end{eqnarray*}
Using \eqref{est_sqrt}, it becomes
\begin{equation} \label{Lipshitz_f}
    \|f(x)-f(z)\|\leq (1+\lambda L)(1+l+\sqrt{1-2\rho\lambda+\lambda^2L^2})\|z-x\|=(1+\lambda L)(1+\theta)\|z-x\|,
\end{equation}
which shows that $f$ is a Lipchitz continues, and the existence and uniqueness follows from the classical Cauchy-Lipschitz-Picard theorem (\cite{feng2016note}). 
\end{proof}

\begin{theorem}
      Let the assumptions of Proposition \ref{exist} be fulfilled.  Assume that
     $$(1+\lambda L)(1+\theta)<2.$$
     Then the dynamical system \eqref{Tseng_dynsys} converges to the solution of the $(QVI)$ at the rate 
     $$\|x(t)-x^*\|\leq \|x_0-x^*\|e^{\Lambda t},$$
where $x^*$ is the equilibrium point of the system.
\end{theorem}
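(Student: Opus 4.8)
The plan is to establish exponential convergence through a Lyapunov argument built on the energy functional $V(t)=\tfrac{1}{2}\|x(t)-x^*\|^2$, exploiting the fact that the vector field $f$ in \eqref{Tseng_dynsys} splits as $f=g-\mathrm{id}$, where $g(x):=P_{K(x)}(x-\lambda F(x))+\lambda\big(F(x)-F(P_{K(x)}(x-\lambda F(x)))\big)$ is the underlying Tseng \emph{forward} map. Since $f$ is globally Lipschitz by the previous theorem, the solution $x(\cdot)$ is defined on all of $[0,+\infty)$, so Gr\"onwall-type estimates apply for every $t\ge 0$. First I would identify the target: let $x^*$ be the unique solution of $(QVI)$ guaranteed by Proposition \ref{exist}. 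By the projection characterization lemma, $x^*=P_{K(x^*)}(x^*-\lambda F(x^*))$, and substituting this into \eqref{Tseng_dynsys} yields $f(x^*)=0$; hence $x^*$ is precisely the equilibrium point to which the trajectory should converge.

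The core step is to differentiate $V$ along the flow. Because $x(\cdot)$ is $C^1$, one has $\dot V(t)=\langle x(t)-x^*,\dot x(t)\rangle=\langle x(t)-x^*,f(x(t))\rangle$. Using $g(x^*)=x^*$ I would rewrite $f(x)=(g(x)-g(x^*))-(x-x^*)$, so that
\[
\dot V(t)=\langle x-x^*,g(x)-g(x^*)\rangle-\|x-x^*\|^2\le \|g(x)-g(x^*)\|\,\|x-x^*\|-\|x-x^*\|^2,
\]
by Cauchy--Schwarz. The essential gain here is that the dissipative term $-\|x-x^*\|^2$, produced by the $-x$ summand of $f$, will dominate once $g$ is shown to be a contraction.

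The decisive computation is then the contraction constant of $g$. Repeating the chain of estimates in the proof of the existence theorem, but \emph{omitting} the leading $\|z-x\|$ term there (which originated solely from the $-x$ part of $f$), gives
\[
\|g(x)-g(z)\|\le (1+\lambda L)\theta\,\|x-z\|+\lambda L\,\|x-z\|=\big[(1+\lambda L)(1+\theta)-1\big]\|x-z\|,
\]
where the projection terms are controlled by the bound $\|P_{K(x)}(x-\lambda F(x))-P_{K(z)}(z-\lambda F(z))\|\le\theta\|x-z\|$ from the preceding lemma together with the $L$-Lipschitz continuity of $F$. Writing $L_g:=(1+\lambda L)(1+\theta)-1$, the hypothesis $(1+\lambda L)(1+\theta)<2$ is exactly the statement $L_g<1$. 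Substituting back, $\dot V\le (L_g-1)\|x-x^*\|^2=2(L_g-1)V$, and Gr\"onwall's inequality yields $V(t)\le V(0)e^{2(L_g-1)t}$, i.e. $\|x(t)-x^*\|\le\|x_0-x^*\|e^{\Lambda t}$ with $\Lambda:=L_g-1=(1+\lambda L)(1+\theta)-2<0$.

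I expect the main obstacle to be bookkeeping rather than conceptual: one must carefully separate the contractive map $g$ from the identity term inside $f$, so as to attribute the dissipation constant $1$ to the $-x$ summand and the constant $L_g=M-1$ to $g$, where $M=(1+\lambda L)(1+\theta)$ is the Lipschitz constant of $f$ computed earlier. Getting this split right is what makes the threshold $M<2$ appear as the sharp contraction condition $L_g<1$ and fixes the sign and value of the decay exponent $\Lambda$. A secondary point worth checking is the global-in-time existence of the trajectory, needed for the Gr\"onwall step to hold for all $t$, which is already supplied by the global Lipschitz continuity of $f$.
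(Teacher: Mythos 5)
Your proposal is correct, and it shares the paper's overall skeleton: the Lyapunov function $V(t)=\tfrac12\|x(t)-x^*\|^2$, the identification of $x^*$ as the equilibrium via $x^*=P_{K(x^*)}(x^*-\lambda F(x^*))$, and a Gr\"onwall estimate producing the exponent $\Lambda=(1+\lambda L)(1+\theta)-2$. Where you genuinely differ is in how the cross term is estimated. The paper expands $\langle x-x^*, f(x)-f(x^*)\rangle$ into four separate inner products --- inserting the intermediate point $P_{K(x^*)}(x(t)-\lambda F(x(t)))$, bounding one piece by $l\|x-x^*\|^2$ via \eqref{cond_proj1}, one by Cauchy--Schwarz plus \eqref{est_sqrt}, and two by the $L$-Lipschitz continuity of $F$ --- and only then collects the constants. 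You instead package the whole forward map into $g$ with $f=g-\mathrm{id}$, prove the single standalone estimate $\|g(x)-g(z)\|\le\bigl[(1+\lambda L)(1+\theta)-1\bigr]\|x-z\|$ (which is the same computation as the paper's Lipschitz bound \eqref{Lipshitz_f} minus the identity term), and apply Cauchy--Schwarz once. The constants agree term for term, so the two arguments are arithmetically equivalent, but your organization buys two things: the hypothesis $(1+\lambda L)(1+\theta)<2$ becomes transparently the contraction condition $L_g<1$ against the dissipation $-\|x-x^*\|^2$, and you keep the factor $\|x-x^*\|^2=2V$ straight. The paper's final line writes $((1+\lambda L)\theta+\lambda L-1)\|x(t)-x^*\|^2=((1+\lambda L)(1+\theta)-2)V(t)$, which silently drops a factor of $2$; taken literally this yields only $\|x(t)-x^*\|\le\|x_0-x^*\|e^{\Lambda t/2}$, whereas your version $\dot V\le 2\Lambda V$ delivers exactly the stated rate $e^{\Lambda t}$. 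Your closing remark on global existence of the trajectory (from the global Lipschitz continuity of $f$) is also a point the paper uses implicitly but does not state.
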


\begin{proof}
    We consider the Lyapunov function $V(x(t))=\frac12\|x(t)-x^*\|^2.$ By mentioning that\\ $x^*=P_{K(x^*)}(x^*-\lambda F(x^*)),$ and using \eqref{est_sqrt} it holds
    \begin{eqnarray*}
        \dot{V}(x)&=&\frac{d}{dt} V(x(t))=\langle x(t)-x^*, \dot{x}\rangle\\
        &=&\langle x(t)-x^*, P_{K(x(t))}(x(t)-\lambda F(x(t)))+\lambda (F(x(t))-F(P_{K(x(t))}(x(t)-\lambda F(x(t))))-x(t)\rangle\\
        &=&\langle x(t)-x^*, P_{K(x(t))}(x(t)-\lambda F(x(t)))+\lambda (F(x(t))-F(P_{K(x(t))}(x(t)-\lambda F(x(t))))-x(t)\\
        &-&(P_{K(x^*)}(x^*-\lambda F(x^*))-x^*)\rangle\\
        &=&\langle x(t)-x^*, P_{K(x(t))}(x(t)-\lambda F(x(t))) -P_{K(x^*)}(x(t)-\lambda F(x(t)))\rangle\\
        &+&
        \langle x(t)-x^*, P_{K(x^*)}(x(t)-\lambda F(x(t))) -P_{K(x^*)}(x^*-\lambda F(x^*))\rangle\\
        &+& \lambda \langle  x(t)-x^*,F(x(t))-F(x^*)\rangle\\
        &+& \lambda \langle  x(t)-x^*, F(P_{K(x^*)}(x^*-\lambda F(x^*)))-F(P_{K(x(t))}(x(t)-\lambda F(x(t))))\rangle - \|x(t)-x^*\|^2\\
        &\leq& l \|x(t)-x^*\|^2+\|x(t)-x^*\|\|x(t)-\lambda F(x(t))-(x^*-\lambda F(x^*))\|+\lambda L\|x(t)-x^*\|^2\\
        &+&\lambda L\|x(t)-x^*\|\|P_{K(x^*)}(x^*-\lambda F(x^*))-P_{K(x(t))}(x(t)-\lambda F(x(t)))\|-\|x(t)-x^*\|^2\\
        &\leq&(l+\lambda L+\sqrt{1-2\lambda \rho+\lambda^2 L^2})\|x(t)-x^*\|^2\\
        &+&\lambda L(\|P_{K(x^*)}(x^*-\lambda F(x^*))-P_{K(x^*)}(x(t)-\lambda F(x(t)))\|\\
        &+&\|P_{K(x^*)}(x(t)-\lambda F(x(t)))-P_{K(x(t))}(x(t)-\lambda F(x(t)))\|)\|x(t)-x^*\|-\|x(t)-x^*\|^2\\
        &\leq& (l+\lambda L+\sqrt{1-\lambda \rho+\lambda^2 L^2}+\lambda L(l+\sqrt{1-2\lambda \rho+\lambda^2 L^2})-1)\|x(t)-x^*\|^2\\
        &=& ((1+\lambda L)\theta+\lambda L-1)\|x(t)-x^*\|^2=((1+\lambda L)(1+\theta)-2)V(t).
    \end{eqnarray*}
    By the assumption $\Lambda:=(1+\lambda L)(1+\theta)-2<0$ and it holds
    $$V(t)\leq V_0 e^{\Lambda t},$$ where $V_0=\frac 12\|x-x_0\|^2,$ which shows that the dynamical system converges to the solution of the problem $(QVI).$
\end{proof}

\begin{rem}
    We can demonstrate similar results for the following dynamical system 
     $$f(x(t))=\alpha(t)[P_{K(x(t))}(x(t)-\lambda F(x(t)))+\lambda (F(x(t))-F(P_{K(x(t))}(x(t)-\lambda F(x(t))))-x(t)],
     $$
     $\alpha(t)\in C([0,+\infty))$ and $\int\limits_{t_0}^{\infty} \alpha (t)=+\infty.$
     In this case $\Lambda(t)=\alpha (t)((1+\lambda L)\theta+\lambda L-1).$ 
\end{rem}

\section{Discrete system of the Tseng type}
The numerical scheme of the Tseng type is 
\begin{equation}\label{tseng-qvi}
\left\{
\begin{array}{llll}
y^{k}=P_{K(x^k)}\left(x^k-\lambda F(x^k)\right)\\
\\
x^{k+1}=y^k+\lambda\left(F(x^k)-F(y^k)\right)
\end{array}
\right.
\end{equation}
where $\lambda>0.$  

\begin{rem}
    If $K$ is a single set, then the scheme \eqref{tseng-qvi} is Tseng's algorithm \cite{tseng2000modified} for solving $(VI).$
\end{rem}
 
\begin{theorem}
Let the assumptions of Proposition \ref{exist} be fulfilled.Assume that
$$(1+\theta)(1+\lambda L)<\sqrt{4-l^2+2l}-1.$$
Then the sequence $\{x_n\}$ generated by the Tseng type algorithm converges linearly to the unique solution of $(QVI).$
\end{theorem}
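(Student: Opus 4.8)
The plan is to show that the iteration map $x^k\mapsto x^{k+1}$ defined by \eqref{tseng-qvi} is a contraction on $\Hi$ whose unique fixed point is the solution $x^*$ of $(QVI)$, so that $\{x^k\}$ converges geometrically. First I would record the fixed-point characterization $x^*=P_{K(x^*)}(x^*-\lambda F(x^*))$ from the earlier lemma, which also gives $y^*:=P_{K(x^*)}(x^*-\lambda F(x^*))=x^*$. Writing the second line of \eqref{tseng-qvi} as $x^{k+1}-x^*=(y^k-x^*)+\lambda\bigl(F(x^k)-F(y^k)\bigr)$ and taking norms, the task reduces to estimating the two pieces in terms of $\|x^k-x^*\|$.

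For the first piece I would invoke the Lemma giving the estimate \eqref{est_theta}, applied to the pair $(x^k,x^*)$: since $y^k=P_{K(x^k)}(x^k-\lambda F(x^k))$ and $x^*=P_{K(x^*)}(x^*-\lambda F(x^*))$, this yields $\|y^k-x^*\|\le\theta\|x^k-x^*\|$. For the second piece I would use $L$-Lipschitz continuity of $F$ together with the triangle bound $\|x^k-y^k\|\le\|x^k-x^*\|+\|x^*-y^k\|\le(1+\theta)\|x^k-x^*\|$, giving $\lambda\|F(x^k)-F(y^k)\|\le\lambda L(1+\theta)\|x^k-x^*\|$. Combining, $\|x^{k+1}-x^*\|\le\bigl(\theta+\lambda L(1+\theta)\bigr)\|x^k-x^*\|=\bigl((1+\lambda L)(1+\theta)-1\bigr)\|x^k-x^*\|$. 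The hypothesis forces the contraction factor below $1$: since $\sqrt{4-l^2+2l}=\sqrt{5-(l-1)^2}\le\sqrt5$ and $0\le l\le1$ under Proposition \ref{exist}, the assumption gives $(1+\theta)(1+\lambda L)<\sqrt{4-l^2+2l}-1\le\sqrt5-1<2$, so $(1+\lambda L)(1+\theta)-1<1$. Iterating yields $\|x^k-x^*\|\le\bigl((1+\lambda L)(1+\theta)-1\bigr)^k\|x^0-x^*\|$, i.e. linear convergence to the unique solution guaranteed by Proposition \ref{exist}.

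The main obstacle is the genuinely quasi-variational feature that the feasible set $K(x^k)$ moves with the iterate, so that $x^*\in K(x^*)$ need not belong to $K(x^k)$; the standard variational-inequality projection arguments therefore do not transfer verbatim. In the crude estimate above this difficulty is already absorbed into \eqref{est_theta}, whose proof used \eqref{cond_proj1} to pay an $l\|x-y\|$ penalty for the set mismatch, and the resulting factor is controlled merely by the weaker requirement $(1+\lambda L)(1+\theta)<2$. To recover the sharper constant $\sqrt{4-l^2+2l}-1$ actually stated, I would instead expand $\|x^{k+1}-x^*\|^2$ and bound the cross term $\langle y^k-x^*,F(x^k)-F(y^k)\rangle$ by means of the projection characterization lemma evaluated at $p^k:=P_{K(x^k)}(x^*)\in K(x^k)$, controlling the mismatch through $\|p^k-x^*\|=\|P_{K(x^k)}(x^*)-P_{K(x^*)}(x^*)\|\le l\|x^k-x^*\|$ via \eqref{cond_proj1}. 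Collecting the quadratic, linear, and constant contributions in the contraction factor and requiring it to be strictly less than $1$ produces a quadratic inequality in $(1+\lambda L)(1+\theta)$ whose admissible root is $\sqrt{4-l^2+2l}-1$; checking that the coefficients assemble into $4-l^2+2l=5-(l-1)^2$ under the bookkeeping is the one place I expect to have to be careful.
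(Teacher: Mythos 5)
Your first two paragraphs already constitute a complete and correct proof of the theorem, but by a genuinely different and more elementary route than the paper's. Your key step is the direct triangle-inequality contraction
$\|x^{k+1}-x^*\|\le\big(\theta+\lambda L(1+\theta)\big)\|x^k-x^*\|=\big((1+\theta)(1+\lambda L)-1\big)\|x^k-x^*\|$,
obtained from \eqref{est_theta} applied to the pair $(x^k,x^*)$ plus $L$-Lipschitz continuity of $F$; this needs only $(1+\theta)(1+\lambda L)<2$, and since the stated hypothesis forces $(1+\theta)(1+\lambda L)<\sqrt{4-l^2+2l}-1\le\sqrt5-1<2$, the theorem follows a fortiori. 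The paper instead works with squared norms: it combines the Lipschitz-type bound \eqref{est1} with a strong-monotonicity-type lower bound \eqref{discrete_est2}, namely $\langle x^k-y^k-\lambda(F(x^k)-F(y^k)),x^k-x^*\rangle\ge\mu\|x^k-x^*\|^2$ with $\mu=\tfrac12-\tfrac{l^2}{2}-\theta+l-\lambda L-\lambda L\theta$, expands $\|x^{k+1}-x^*\|^2$, and arrives at the rate $r=1-2\mu+(1+\theta)^2(1+\lambda L)^2$ in \eqref{est_k+1}; the theorem's hypothesis is then precisely the condition $r<1$ of \eqref{disc_est_parameter}. What each approach buys: yours is shorter, proves the result under the strictly weaker condition $(1+\theta)(1+\lambda L)<2$, and, writing $A=(1+\theta)(1+\lambda L)$, one can check $r=(A+1)^2+(l-1)^2-4$, so $r-(A-1)^2=4(A-1)+(l-1)^2\ge0$ and your contraction factor $A-1$ is actually no worse than the paper's $\sqrt r$; the paper's computation, on the other hand, is what generates and explains the otherwise unmotivated threshold $\sqrt{4-l^2+2l}-1$. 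One caveat on your closing paragraph: it rests on a misreading. Since $\sqrt{4-l^2+2l}-1\le\sqrt5-1<2$, the stated constant is \emph{more} restrictive than your requirement $(1+\theta)(1+\lambda L)<2$, not sharper; there is nothing left to recover, so the refinement you sketch there (via $p^k=P_{K(x^k)}(x^*)$) is unnecessary for the theorem as stated, and in any case it is not the paper's mechanism, which is the $\mu$-estimate described above.
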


\begin{proof}
Let $x^*\in K(x^*)$ be the solution of $(QVI).$ Taking into account that 
$$f(x^*)=P_{K(x^*)}(x^*-\lambda F(x^*))+\lambda(F(x^*)-F(P_{K(x^*)}(x^*-\lambda F(x^*))))-x^*=0,$$ and setting $z=x^k,~x=x^*$ in \eqref{Lipshitz_f}, we have 

\begin{eqnarray*}
  \|f(x^k)\|&=&\|x^k-P_{K(x^k)}(x^k-\lambda F(x^k)) -\lambda(F(x^k)-F(P_{K(x^k)}(x^k-\lambda F(x^k))))\|\\
&\leq& (1+\theta)(1+\lambda L)\|x^k-x^*\|,  
\end{eqnarray*}
or, equivalently
\begin{equation}\label{est1}
    \|x^k-y^k -\lambda(F(x^k)-F(y^k))\|\leq (1+\theta)(1+\lambda L)\|x^k-x^*\|.
\end{equation}

On the other hand,
\begin{eqnarray*}
   &&\langle x^k-P_{K(x^k)}(x^k-\lambda F(x^k)) -\lambda(F(x^k)-F(P_{K(x^k)}(x^k-\lambda F(x^k)))),x^k-x^*\rangle\\
=&&\langle x^k-P_{K(x^k)}(x^k-\lambda F(x^k)) -\lambda(F(x^k)-F(P_{K(x^k)}(x^k-\lambda F(x^k))))\\
-&&x^*+P_{K(x^*)}(x^*-\lambda F(x^*)) +\lambda(F(x^*)-F(P_{K(x^*)}(x^*-\lambda F(x^*)))),x^k-x^*\rangle\\ 
=&&\|x^k-x^*\|^2-\langle P_{K(x^k)}(x^k-\lambda F(x^k)) - P_{K(x^*)}(x^k-\lambda F(x^k)), x^k-x^*\rangle\\
-&&\langle P_{K(x^*)}(x^k-\lambda F(x^k)) - P_{K(x^*)}(x^*-\lambda F(x^*)), x^k-x^*\rangle -\lambda \langle F(x^k)-F(x^*), x^k-x^*\rangle\\
-&&\lambda \langle F(P_{K(x^*)}(x^*-\lambda F(x^*)))-F(P_{K(x^k)}(x^k-\lambda F(x^k))), x^k-x^*\rangle. 
\end{eqnarray*}
Consequently, taking into account of \eqref{est_sqrt} we have
\begin{eqnarray*}
   &&\langle x^k-P_{K(x^k)}(x^k-\lambda F(x^k)) -\lambda(F(x^k)-F(P_{K(x^k)}(x^k-\lambda F(x^k)))),x^k-x^*\rangle\\
\geq &&\|x^k-x^*\|^2-\frac{1}{2}\|P_{K(x^k)}(x^k-\lambda F(x^k)) - P_{K(x^*)}(x^k-\lambda F(x^k))\|^2-\frac{1}{2}\|x^k-x^*\|^2\\
-&&\|P_{K(x^*)}(x^k-\lambda F(x^k)) - P_{K(x^*)}(x^*-\lambda F(x^*))\|\cdot \|x^k-x^*\| -\lambda L\|x^k-x^*\|^2\\
-&&\lambda L\|P_{K(x^*)}(x^*-\lambda F(x^*))-P_{K(x^k)}(x^k-\lambda F(x^k))\|\cdot\|x^k-x^*\|\\
=&&(1-\frac{1}{2}-\frac{l^2}{2}- \theta+l-\lambda L -\lambda L\theta)\|x^k-x^*\|^2=(\frac{1}{2}-\frac{l^2}{2}- \theta +l -\lambda L -\lambda L\theta)\|x^k-x^*\|^2,
\end{eqnarray*}
or equivalently
\begin{eqnarray}\label{discrete_est2}
    &&\langle x^k- P_{K(x^k)}(x^k-\lambda F(x^k))\nonumber\\
    &-&\lambda(F(x^k)-F(P_{K(x^k)}(x^k-\lambda F(x^k)))),x^k-x^*\rangle\geq \mu \|x^k-x^*\|^2,
\end{eqnarray}
where $\mu:=\frac{1}{2}-\frac{l^2}{2}-\theta+l-\lambda L -\lambda L\theta.$ Assuming $\mu>0,$ and since $y^k=P_{K(x^k)}(x^k-\lambda F(x^k)),$ it can be rewritten as
\begin{equation}\label{mu_yk}
    \langle y^k -x^k +\lambda(F(x^k)-F(y^k)),x^k-x^*\rangle\leq -\mu \|x^k-x^*\|^2
\end{equation}
Combining \eqref{est1} and \eqref{discrete_est2}, one has
\begin{equation} \label{norm^2}
    \|x^k-y^k -\lambda(F(x^k)-F(y^k))\|^2\leq \frac{(1+\theta)^2(1+\lambda L)^2}{\mu} \langle x^k- y^k -\lambda(F(x^k)-F(y^k)),x^k-x^*\rangle.
\end{equation}
By using  \eqref{mu_yk} and \eqref{norm^2}, we obtain that
\begin{eqnarray*}
    \|x^{k+1}-x^*\|^2&=&\|y^k +\lambda(F(x^k)-F(y^k))-x^*\|^2\\
    &=&\|y^k +\lambda(F(x^k)-F(y^k))-x^k\|^2+\|x^k-x^*\|^2\\
    &+&2\langle y^k +\lambda(F(x^k)-F(y^k))-x^k,x^k-x^*\rangle\\
    &\leq&\|x^k-x^*\|^2+\frac{(1+\theta)^2(1+\lambda L)^2}{\mu} \langle x^k- y^k -\lambda(F(x^k)-F(y^k)),x^k-x^*\rangle\\
    &+&2\langle y^k +\lambda(F(x^k)-F(y^k))-x^k,x^k-x^*\rangle\\
    &=&\|x^k-x^*\|^2+\Big(2-\frac{(1+\theta)^2(1+\lambda L)^2}{\mu}\Big)\\
    &\cdot&\langle y^k +\lambda(F(x^k)-F(y^k))-x^k,x^k-x^*\rangle\\
    &\leq &(1-2\mu+(1+\theta)^2(1+\lambda L)^2)\|x^k-x^*\|^2,
\end{eqnarray*}
or equivalently,
\begin{equation} \label{est_k+1}
\|x^{k+1}-x^*\|^2\leq (1-2\mu+(1+\theta)^2(1+\lambda L)^2)\|x^k-x^*\|^2. 
\end{equation}
Finally we get
\begin{equation}
    \|x^{k+1}-x^*\|\leq (1-2\mu+(1+\theta)^2(1+\lambda L)^2)^{\frac{n}{2}}\|x^0 - x^*\|.\end{equation}
It is clear that if 
\begin{equation} \label{disc_est_parameter}
    r:=1-2\mu+(1+\theta)^2(1+\lambda L)^2<1,
\end{equation}
then sequence $\{x^k\}$ converges linearly to the solution of $(QVI).$ 

If the condition \eqref{disc_est_parameter} is fulfilled, then it holds that $\mu>0$ and
\begin{eqnarray}
   && (1+\theta)^2(1+\lambda L)^2<2\mu=2\Big(\frac{1}{2}-\frac{l^2}{2}-\theta+l-\lambda L -\lambda L\theta\Big)\nonumber\\
   \Leftrightarrow&& (1+\theta)^2(1+\lambda L)^2+2(1+\theta)(1+\lambda L)+1<4-l^2+2l\nonumber\label{est_l}\\
   \Leftrightarrow&&((1+\theta)(1+\lambda L)+1)^2<4-l^2+2l.
\end{eqnarray}
We can easy check that $\sqrt{4-l^2+2l}-1>0$ for $0<l<3,$ and consequently, we have
$$(1+\theta)(1+\lambda L)<\sqrt{4-l^2+2l}-1.$$
\end{proof}

\begin{rem}
    Note that the function $\delta(l):=4-l^2+2l$ reaches its maximum value at $l=1.$ Therefore
        $$\sqrt{4-l^2+2l}-1\leq \sqrt{5}-1\approx 1.13.$$
\end{rem}

\begin{example} (cf. \cite{keten2021efficient})\\
We consider the real Hilbert space 
$$\Hi=l_2=\Big\{x=(x_0,x_1,x_2,\dots)|~\sum\limits_{k=0}^{\infty} |x_k|^2<\infty~\mbox{and}~x_k\in \R,~\forall k\in \mathbb{N}_0=\{0,1,2,\dots\}\Big\}$$ endowed with norm induced by inner product $\|x\|_2=\sqrt{\langle x,x\rangle}=\Big(\sum\limits_{k=0}^{\infty} |x_k|^2\Big)^{\frac 12}$ and define an operator $F:\Hi\rightarrow \Hi$ by $F(x)=\alpha x+|\sin x|,$ where $\alpha>1$ is a given constant and $|\sin x|:=(|\sin x_0|, |\sin x_1|,|\sin x_2|,\dots).$ For any $x,y\in \Hi,$ it holds
\begin{eqnarray*}
    \|F(x)-F(y)\|&=&\|\alpha x+|\sin x| - \alpha y-|\sin y| \|\leq \alpha \|x-y\|+\||\sin x|-|\sin y|\|\\
    \leq &&\alpha \|x-y\|+\|x-y\|=(\alpha+1)\|x-y\|.
\end{eqnarray*}
Therefore, $F$ is $(\alpha+1)$-Lipschitz continuous. 
On the other hand
\begin{eqnarray*}
    \langle F(x)+F(y),x-y\rangle &=&\langle 
\alpha x+|\sin x| - \alpha y-|\sin y|, x-y\rangle\\
  &=&\alpha \|x-y\|^2+\langle |\sin x| - |\sin y|,x-y\rangle.
\end{eqnarray*}
According to the mean value theorem, $\exists z_k$ such
that $\sin x_k-\sin y_k=\cos z_k (x_k-y_k),$ and $|\cos z_k|\leq 1.$ Whence, we obtain that $|\sin x_k-\sin y_k|\geq - |x_k-y_k|. $ Consequently, it holds
$$\langle F(x)+F(y),x-y\rangle \geq \alpha \|x-y\|^2 - \|x-y\|^2=(\alpha -1)\|x-y\|^2,$$
which shows that $F$ is $(\alpha-1)$-strongly monotone. 
Now, for every $x\in \Hi,$ we define a set-valued mapping $K:\Hi\rightrightarrows \Hi$ by
$$K(x)=K((x_k)_{k=0}^{\infty})=\Big\{(y_k)_{k=0}^{\infty}|~y_0\geq \frac{x_0}{10},~\mbox{and}~y_k=0,~\forall k\in \N\Big\}.$$
It is easy to verify that $K(x)$ is a closed and convex set for any $x\in \Hi$ (see \cite{keten2021efficient}). Let us mention that $P_{K(x)}:\Hi\rightarrow K(x)$ is a metric projection given by
$$P_{K(x)}(z_0,z_1,z_2,\dots)=\left\{
\begin{array}{ll}
   (z_0,z_1,z_2,\dots)  ,&\mbox{if}~ (z_0,z_1,z_2,\dots)\in K(x)\\
  \Big(\frac{x_0}{10},0,0,0,\dots)  ,&\mbox{if}~ (z_0,z_1,z_2,\dots)\notin K(x)~\mbox{and}~z_0<\frac{x_0}{10}\\
  \Big(z_0,0,0,0,\dots)  ,&\mbox{if}~ (z_0,z_1,z_2,\dots)\notin K(x)~\mbox{and}~z_0\geq \frac{x_0}{10}.
\end{array}
\right.$$
For any $x=(x_k)_{k=0}^{\infty},~y=(y_k)_{k=0}^{\infty}$ and $z=(z_k)_{k=0}^{\infty},$ it was shown that
$$\|P_{K(x)}(z)-P_{K(y)}(z)\|_2\leq \frac{1}{10}\|x-y\|_2.$$
Therefore, the condition \eqref{cond_proj1} is true with $l=\frac{1}{10}.$ Consequently, the problem has a unique solution which is $x^*=(0,0,0,\dots).$ 
\end{example}
Figure 1 shows convergence behavior of $\|x^k -x^*\|_2$ for the Tseng type algorithm with initial point $x^0=(\frac 12,\frac 16,\cdots)$, and $\alpha=2,~\lambda=0.1.$
\begin{center}
   \begin{figure}[!ht]
    \includegraphics[width=0.7\columnwidth]{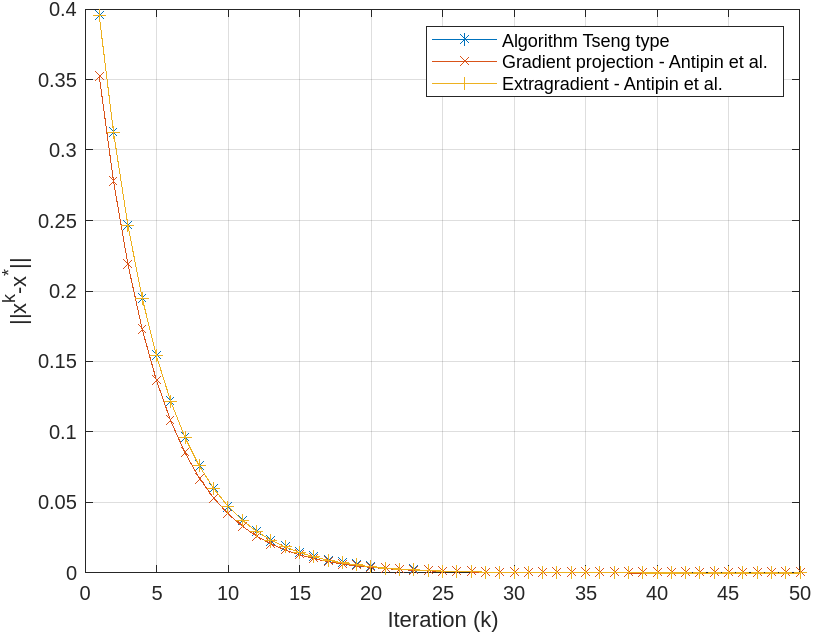}
    \caption{Convergence behavior of $\|x^k -x^*\|_2$ compared with algorithms in \cite{antipin2018extragradient} using MATLAB}
\end{figure} 
\end{center}
Before we consider the special case with the moving set, let us mention the following result.

\begin{lemma}
    Let $m:\Hi\rightarrow \Hi$ be a single-valued mapping, and $K\subseteq H$ be nonemplty closed convex subset. Then it holds
    \begin{eqnarray*}
        P_{m(x)+K}(z)=m(x)+P_K(z-m(x))~~\forall z\in \Hi.
    \end{eqnarray*}
\end{lemma}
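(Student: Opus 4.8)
The plan is to treat $m(x)$ as a fixed vector (throughout the statement $x$ is held fixed, so $m(x)$ is simply a constant element of $\Hi$) and to exploit the translation equivariance of the metric projection. The cleanest route is a direct minimization argument; I would also sketch the alternative verification via the variational characterization of the projection recorded earlier.

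First I would observe that every element of $m(x)+K$ can be written uniquely as $m(x)+k$ with $k\in K$, and substitute this into the defining minimization. Writing $\|z-(m(x)+k)\|^2=\|(z-m(x))-k\|^2$ shows that minimizing $\|z-y\|$ over $y\in m(x)+K$ is equivalent to minimizing $\|(z-m(x))-k\|$ over $k\in K$. By the definition of $P_K$, the latter is solved by $k=P_K(z-m(x))$, and hence the minimizing element of $m(x)+K$ is $m(x)+P_K(z-m(x))$, which is exactly the claimed identity.

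Alternatively, I would verify the necessary and sufficient characterization from the earlier projection lemma. Setting $w:=m(x)+P_K(z-m(x))$, membership $w\in m(x)+K$ is immediate since $P_K(z-m(x))\in K$. For the angle condition, I would take an arbitrary $y=m(x)+k\in m(x)+K$ and compute $\langle z-w,\,y-w\rangle=\langle (z-m(x))-P_K(z-m(x)),\,k-P_K(z-m(x))\rangle$, where the translations by $m(x)$ cancel. This quantity is $\leq 0$ by the projection inequality applied to the point $z-m(x)$ and the set $K$. Since the projection onto a closed convex set is unique, this identifies $w$ with $P_{m(x)+K}(z)$.

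The only point needing a word of justification is that $m(x)+K$ is again nonempty, closed, and convex, so that $P_{m(x)+K}$ is well defined and single valued; this holds because translation by the fixed vector $m(x)$ is an affine homeomorphism of $\Hi$ preserving all three properties. Beyond this bookkeeping there is no genuine obstacle: the statement is the standard translation equivariance of the metric projection, and the hardest part is merely tracking the substitution $y=m(x)+k$ carefully so that the $m(x)$ terms cancel as intended.
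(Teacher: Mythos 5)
Your proposal is correct. Note, however, that the paper itself states this lemma \emph{without any proof} (it is recorded as a standard fact just before the moving-set remark), so there is no argument of the author's to compare against. Your direct minimization argument is exactly the standard one: substituting $y=m(x)+k$ turns $\min_{y\in m(x)+K}\|z-y\|$ into $\min_{k\in K}\|(z-m(x))-k\|$, whose solution is $P_K(z-m(x))$ by definition, and translating back gives the identity. Your alternative verification via the variational characterization $\langle z-w,\,y-w\rangle\leq 0$ is also valid and has the small advantage of tying the lemma to the projection characterization the paper records earlier; either route, together with your observation that $m(x)+K$ remains nonempty, closed, and convex under translation, constitutes a complete proof that the paper omits.
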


\begin{rem} 
    If $K(x)=m(x)+K,$ then the $(QVI)$ is reduced to the case with the moving set. If $m:\Hi\rightarrow \Hi$ be $\beta$-Lipschitz continuous, with $\beta\geq 0,$ i.e.
     $$\|m(x)-m(y)\|\leq \beta \|x-y\|~~\forall x,y\in \Hi,$$
     then the condition is fulfilled with $l=2\beta.$ Indeed
     \begin{eqnarray*}
         &&\| P_{m(x)+K}(z)-P_{m(y)+K}(z)\|\leq \|m(x)+P_K(z-m(x))-m(y)-P_K(z-m(y)\|\\
         &&\leq \beta\|x-y\|+\|P_K(z-m(x))-P_K(z-m(y)\|\leq 2\beta\|x-y\|.
     \end{eqnarray*}
\end{rem}

\begin{prop}  Let $F:\Hi\rightarrow \Hi$ be $L$-Lipschitz continuous and $\rho$-strongly monotone on $\Hi.$ Let $K(x):=m(x)+K,$ and $m:\Hi\rightarrow \Hi$ be $\beta$-Lipschitz continuous. If
     $$(1+\theta)(1+\lambda L)<2\sqrt{1-\beta^2+\beta}-1,$$
then the sequence $\{x_n\}$ generated by the Tseng type algorithm converges linearly to the unique solution of $(QVI).$
\end{prop}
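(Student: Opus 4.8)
The plan is to derive this proposition as an immediate corollary of the discrete-time linear convergence theorem, by specializing the generic projection-Lipschitz constant $l$ to the moving-set structure. First I would invoke the Remark preceding this statement, which establishes that whenever $K(x)=m(x)+K$ with $m$ being $\beta$-Lipschitz continuous, the key projection estimate \eqref{cond_proj1} holds with $l=2\beta$ (this in turn rests on the preceding Lemma giving $P_{m(x)+K}(z)=m(x)+P_K(z-m(x))$ together with nonexpansiveness of $P_K$). This single observation carries all the structural content: it places us squarely within the hypotheses of Proposition \ref{exist} with $l=2\beta$, so that the $(QVI)$ admits a unique solution $x^*$, and it lets us reuse verbatim the convergence machinery already built for the general discrete case.

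Next I would verify that the parameter inequality in the proposition is nothing more than the threshold condition $(1+\theta)(1+\lambda L)<\sqrt{4-l^2+2l}-1$ of the discrete theorem, rewritten in terms of $\beta$. Substituting $l=2\beta$ gives $4-l^2+2l=4-4\beta^2+4\beta=4(1-\beta^2+\beta)$, so that $\sqrt{4-l^2+2l}-1=2\sqrt{1-\beta^2+\beta}-1$. Hence the hypothesis $(1+\theta)(1+\lambda L)<2\sqrt{1-\beta^2+\beta}-1$ is exactly the hypothesis of the discrete theorem under the identification $l=2\beta$, and I would then apply that theorem directly to conclude the linear convergence of $\{x^k\}$ to $x^*$ with contraction factor $r=1-2\mu+(1+\theta)^2(1+\lambda L)^2$.

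Since the argument is a pure specialization, I do not expect a genuine obstacle; the only points meriting care are bookkeeping ones. I would check that the admissibility range for $l$ in Proposition \ref{exist} stays consistent with $l=2\beta$, and I would note that the stated inequality implicitly forces $2\sqrt{1-\beta^2+\beta}-1>0$, i.e. $\beta<\tfrac{3}{2}$, which is the moving-set analogue of the $0<l<3$ condition recorded in the Remark following the discrete theorem. With these consistency checks in place, the conclusion follows with no further computation.
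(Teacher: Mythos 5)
Your proposal is correct and is exactly the argument the paper intends: the paper states this proposition without proof, immediately after the Lemma on $P_{m(x)+K}(z)=m(x)+P_K(z-m(x))$ and the Remark establishing $l=2\beta$, so the intended proof is precisely your substitution of $l=2\beta$ into the discrete theorem's threshold, using $\sqrt{4-l^2+2l}-1 = 2\sqrt{1-\beta^2+\beta}-1$. Your additional consistency checks (the admissible range of $l$ and the implicit bound $\beta<\tfrac{3}{2}$) are sound bookkeeping that the paper itself leaves unstated.
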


\section{Conclusion}
We have introduced a new approach for solving QVI based on Tseng's scheme (\cite{tseng2000modified}, also see \cite{boct2020forward}), which has been studied for variational inequality problems. A similar approach can be explored with weak assumptions such as strongly pseudo- monotonicity and parameters' assumption can be adjusted. These will be topics of future research. 

\subsection*{Acknowledgements} 
The author gratefully acknowledges the excellent research environment provided by Prof. R.I. Bo\c t, as well as E.R. Csetnek for their valuable discussions in improving the manuscript during his visit to the Faculty of Mathematics at the University of Vienna.

\end{document}